\newtheorem{thm}{Theorem}[section]
\newtheorem{cor}[thm]{Corollary}
\theoremstyle{remark}
\newtheorem{rem}[thm]{Remark}
\newtheorem{con}[thm]{Conclusion} 
\theoremstyle{definition}
\newtheorem{dfn}[thm]{Definition}
\newtheorem{ex}[thm]{Example}
\newtheorem{prb}[thm]{Open Problem}
\numberwithin{equation}{section}
\numberwithin{thm}{section}
\begin{document}



\subjclass{Primary: 26A33}
\keywords{Generalized Fractional Integral, Generalized fractional derivative, Riemann-Liouville Fractional derivative, Hadamard Fractional Derivative, Semigroup Property}



\title{New Approach to a generalized Fractional Integral}


%

\author{Udita Katugampola}
\address{southern illinois university, Carbondale, IL 62901, USA.
}

\email{uditanalin@yahoo.com}


%

\thanks{Submitted to : Applied Mathematics and Computation}

\thanks{Author highly appreciates the valuable comments and suggestions given by Dr. Jerzy Kocik, Department of mathematics, Southern Illinois University, Carbondale, IL 62901}

%

\begin{abstract}
The paper presents a new fractional integration, which generalizes the Riemann-Liouville and Hadamard fractional integrals into a single form, which when a parameter fixed at different values, produces the above integrals as special cases. Conditions are given for such a generalized fractional integration operator to be bounded in an extended Lebesgue measurable space. Semigroup property for the above operator is also proved. Finally, we give a general definition of the Fractional derivatives and give some examples.

\vspace{.4cm}
\noindent Submitted to : Applied Mathematics and Computation
\end{abstract}


\maketitle


 \section{Introduction}
History of Fractional Calculus (FC) goes back to seventeenth century, when in 1695 the derivative of order $\alpha = 1/2$ was described by Leibnitz. Since then, the new theory turned out to be very attractive to mathematicians as well as biologists, chemists, economists, engineers and physicsts. Several books were written on the theories and developments of FC \cite{key-8, key-2, key-9,key-11,key-12}. In \cite{key-9} Samko, et al, provides a comprehensive study of the subject. Several different derivatives were introduced: Riemann-Liouville, Hadamard, Grunwald-Letnikov, Riesz and Caputo are just a few \cite{key-8, key-2, key-9,key-11,key-12}.  

In fractional calculus, the fractional derivatives are defined by a fractional integral \cite{key-8, key-2, key-9, key-11, key-12}. There are several known forms of the fractional integrals of which two have been studied extensively for their applications \cite{key-1,key-2,key-3,key-5,key-6,key-7,key-8,key-9}. The first is the \emph{Riemann-Liouville fractional integral} defined for parameter $\alpha \in \textbf{R}$ by,
\begin{equation}
{}_a \mathbb{I}^\alpha_x f(x) = \frac{1}{\Gamma(\alpha)}\int_a^x (x - \tau)^{\alpha -1} f(\tau) d\tau  \quad ;\alpha > 0,\, x > a.
\label{eq:R-L}
\end{equation}
motivated by the \emph{Cauchy integral formula} 
\begin{equation}\label{eq:ch}
\int_a^x d\tau_1 \int_a^{\tau_1} d\tau_2 \cdots \int_a^{\tau_{n -1}} f(\tau_n)d\tau_n 
                = \frac{1}{\Gamma(n)}\int_a^x (x - \tau)^{n -1} f(\tau) d\tau 
\end{equation}
well-defined for $\alpha \in \mathbb{N}$. 
The second is the \emph{Hadamard Fractional integral} introduced by J. Hadamard \cite{key-10} ,  and given by,
\begin{equation}
{}_a \mathbf{I}^\alpha_x f(x) = \frac{1}{\Gamma(\alpha)}\int_a^x \Big(\log\frac{x}{\tau}\Big)^{\alpha -1} f(\tau)\frac{d\tau}{\tau} \quad ;\alpha > 0, \, x > a.
\label{eq:H}
\end{equation}
This is based on the generalization of the integral
\begin{equation}\label{eq:chh}
\int_a^x \frac{d\tau_1}{\tau_1} \int_a^{\tau_1} \frac{d\tau_2}{\tau_2} \cdots \int_a^{\tau_{n -1}} \frac{f(\tau_n)}{\tau_n}\, d\tau_n 
                = \frac{1}{\Gamma(n)}\int_a^x \Big(\log \frac{x}{\tau}\Big)^{n -1} f(\tau) \frac{d\tau}{\tau} 
\end{equation}

Here we want to present the fractional integration, which generalizes both the Riemann-Liouville and Hadamard fractional integrals into a single form. New generalization is based on the observation that, for $n \in \mathbb{N}$,
\begin{align}
\int_a^x \tau_1^\rho d\tau_1 \int_a^{\tau_1} \tau_2^\rho d\tau_2 \cdots &\int_a^{\tau_{n -1}} \tau_n^\rho f(\tau_n)d\tau_n \nonumber \\ 
&= \frac{(\rho +1)^{1-n}}{(n-1)!} \int_a^x (t^{\rho + 1}-\tau^{\rho +1})^{n-1} \tau^\rho f(\tau) d\tau
\label{eq:nint1}
\end{align}
which suggests the following fractional version
\begin{equation}
{}^\rho_a I^\alpha_x f(x) = \frac{(\rho + 1)^{1-\alpha }}{\Gamma({\alpha})} \int^x_a (x^{\rho +1} - \tau^{\rho +1})^{\alpha -1} \tau^\rho f(\tau) d\tau 
\label{eq:fint1}
\end{equation}
where $\alpha \; \text{and} \, \rho \neq -1$ are real numbers.

These integrals are called \emph{left-sided} integrals. Similarly we can define \emph{right-sided} integrals \cite{key-8, key-2, key-9}. 

In the subsequent sections, we give conditions for the integration operator ${}^\rho_a I^\alpha_x$ to be bounded in $\textit{X}^p_c(a,b)$. We also establish semigroup property for the generalized fractional integration operator and finally, we give the general definition of the Fractional derivatives. 


\section {Generalization of the fractional integration}
Consider the space $\textit{X}^p_c(a,b) \; (c\in \textbf{R}, \, 1 \leq p \leq \infty)$ of those complex-valued Lebesgue measurable functions $f$ on $[a, b]$ for which $\|f\|_{\textit{X}^p_c} < \infty$, where the norm is defined by
\begin{equation}\label{eq:df1}
\|f\|_{\textit{X}^p_c} =\Big(\int^b_a |t^c f(t)|^p \frac{dt}{t}\Big)^{1/p} < \infty \quad (1 \leq p < \infty,\, c \in \textbf{R}) 
\end{equation}
\noindent and for the case $p=\infty$
\begin{equation} \label{eq:df2}
\|f\|_{\textit{X}^\infty_c} = \text{ess sup}_{a \leq t \leq b} [t^c|f(t)|]  \quad ( c \in \textbf{R}).
\end{equation}
In particular, when $c = 1/p \; (1 \leq p \leq \infty),$ the space $\textit{X}^p_c$ coincides with the classical $\textit{L}^p(a,b)$-space with
\begin{align} \label{eq:df3}
&\|f\|_p =\Big(\int^b_a |f(t)|^p \frac{dt}{t}\Big)^{1/p} < \infty \quad (1 \leq p < \infty), \\
&\|f\|_\infty = \text{ess sup}_{a \leq t \leq b} |f(t)|   \quad ( c \in \textbf{R}).
\end{align}

To derive a formula for the generalized integral, consider for natural $n \in \mathbb{N} = \{1, 2, \dots \}$ and real $\rho$ and $ a \geq 0$, the n-fold integral of the form 
\begin{equation}
{}^\rho_a \mathcal{I}^\alpha_x f(x) = \int_a^x \tau_1^\rho d\tau_1 \int_a^{\tau_1} \tau_2^\rho d\tau_2 \cdots \int_a^{\tau_{n -1}} \tau_n^\rho f(\tau_n)d\tau_n 
\label{eq:inte-g}
\end{equation}

\noindent First notice that-using the Dirichlet technique (see p. 64 of \cite{key-2}), we have

\begin{align*}
\int_a^x \tau_1^\rho d\tau_1 \int_a^{\tau_1} \tau^\rho f(\tau) d\tau &= \int_a^x \tau^\rho f(\tau) d\tau \int_\tau^x \tau_1^\rho d\tau_1  \\ &= \frac{1}{\rho +1} \int_a^x (x^{\rho + 1}-\tau^{\rho +1}) \tau^\rho f(\tau) d\tau 
\end{align*}
Repeating the above step $n-1$ times we arrived at
\begin{align}
\int_a^x \tau_1^\rho d\tau_1 \int_a^{\tau_1} \tau_2^\rho d\tau_2 \cdots &\int_a^{\tau_{n -1}} \tau_n^\rho f(\tau_n)d\tau_n \nonumber \\ 
&= \frac{(\rho +1)^{1-n}}{(n-1)!} \int_a^x (x^{\rho + 1}-\tau^{\rho +1})^{n-1} \tau^\rho f(\tau) d\tau
\label{eq:nint}
\end{align}
Fractional version of (\ref{eq:nint}) is the following
\begin{equation}
{}^\rho_a I^\alpha_x f(x) = \frac{(\rho + 1)^{1-\alpha}}{\Gamma({\alpha})} \int^x_a (x^{\rho +1} - \tau^{\rho +1})^{\alpha -1} \tau^\rho f(\tau) d\tau 
\label{eq:fint}
\end{equation}
where $\alpha$ and $\rho \neq -1$ are real numbers. When $\rho = 0$ we arrive at the standard Riemann-Liouville fractional integral, which is used to define both the Riemann-Liouville and Caputo fractional derivatives \cite{key-8, key-2, key-9}. Using L'hospital rule, when $\rho \rightarrow -1^+$, we have
\begin{align*}
\lim_{\rho \rightarrow -1^+} \frac{(\rho + 1)^{1-\alpha}}{\Gamma({\alpha})} & \int^x_a (x^{\rho +1} - \tau^{\rho +1})^{\alpha -1} \tau^\rho f(\tau) d\tau \\
&=\frac{1}{\Gamma({\alpha})}  \int^x_a \lim_{\rho \rightarrow -1^+} \bigg(\frac{x^{\rho +1} - \tau^{\rho +1}}{\rho + 1}\bigg)^{\alpha -1} \tau^\rho f(\tau) d\tau \\
&= \frac{1}{\Gamma(\alpha)} \int_a^x \Big(\log \frac{x}{\tau} \Big)^{\alpha -1} \frac{f(\tau)}{\tau} d\tau
\end{align*}
This is the famous Hadamard fractional integral (\ref{eq:H}) ~\cite{key-1,key-2,key-3,key-5,key-6,key-7,key-8}. Hadamard fractional integral has been extensively studied, especially, Hadamard-type fractional calculus \cite{key-3}, composition and semigroup properties \cite{key-1}, Mellin transforms \cite{key-5}, integration by parts formulae \cite{key-6}, and \emph{G-transform} representations \cite{key-7} are just a few.   

\section{Boundedness in the space $\textit{X}^p_c(a,b)$}\label{sec: gfi}
In this section we show that the generalized fractional integration operator $ {}^\rho_a I^\alpha_t$ is well-defined on $\textit{X}^p_c(a,b)$ for $\rho \geq c $. We have the following theorem

\begin{thm} \label{eq:th1}
Let $\alpha > 0,\, 1 \leq p \leq \infty,\, 0 <a < b < \infty$ and let $\rho \in \textbf{R}$ and $c \in \textbf{R}$ be such that $\rho \geq c$. Then the operator ${}^\rho_a I^\alpha_t$ is bounded in $\textit{X}^p_c(a,b)$ and 
\begin{equation}
\|{}^\rho_a I^\alpha_{t}f \|_{\textit{X}^p_c} \leq \sl{K}\|f\|_{\textit{X}^p_c}
\end{equation}
\label{eq:thm1}
where 
\begin{equation}
\sl{K} = \frac{b^{\alpha(\rho+1)-1}}{\Gamma(\alpha)}\int^{\frac{b}{a}}_1 u^{c-\alpha(\rho+1)-1}\Big(\frac{u^{\rho+1}-1}{\rho +1}\Big)^{\alpha -1} du \quad \text{; $\rho \neq -1$}
\end{equation}
\label{eq:con1}


\end{thm}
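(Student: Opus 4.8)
The plan is to estimate the $\textit{X}^p_c$-norm of ${}^\rho_a I^\alpha_t f$ directly from the definition by reducing the problem to a convolution-type inequality on the multiplicative group $(0,\infty)$, where Young's inequality (in the guise of the generalized Minkowski integral inequality) applies cleanly. The substitution $\tau = t\,\sigma$ (equivalently $\sigma = \tau/t \in [a/t,1]$) is the natural change of variables that turns the weight $t^{\rho+1}-\tau^{\rho+1}$ into $t^{(\rho+1)(\alpha-1)}\bigl(1-\sigma^{\rho+1}\bigr)^{\alpha-1}$ and exposes a kernel depending only on the ratio $t/\tau$.

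First I would write, for $1 \le p < \infty$,
\[
\|{}^\rho_a I^\alpha_t f\|_{\textit{X}^p_c}
= \frac{(\rho+1)^{1-\alpha}}{\Gamma(\alpha)}
\left(\int_a^b \left| t^c \int_a^t (t^{\rho+1}-\tau^{\rho+1})^{\alpha-1}\tau^\rho f(\tau)\,d\tau\right|^p \frac{dt}{t}\right)^{1/p},
\]
and substitute $\tau = t u^{-1}$ inside the inner integral so that $u = t/\tau$ ranges over $[1, t/a] \subseteq [1, b/a]$ and $d\tau = -t u^{-2}\,du$. After collecting powers of $t$ and $u$, the inner integral becomes $t^{\alpha(\rho+1)-1}$ times an integral of $u^{-\alpha(\rho+1)}(u^{\rho+1}-1)^{\alpha-1}(\rho+1)^{\alpha-1}$ against $f(t/u)\cdot (t/u)^{?}$; one bookkeeps the exponents so that the factor hitting $f$ is exactly $(t/u)^c f(t/u)$ — i.e. the thing whose $\textit{X}^p_c$-norm we control — leaving a clean kernel $k(u)$ in the variable $u$ alone. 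Concretely one arrives at an inequality of the shape
\[
t^c\,{}^\rho_a I^\alpha_t f(t)
= \frac{b^{\alpha(\rho+1)-1}}{\Gamma(\alpha)}\int_1^{b/a} \varphi(u)\,\bigl[(tu^{-1})^c f(tu^{-1})\bigr]\,du \cdot(\text{bounded factor}),
\]
with $\varphi(u) = u^{c-\alpha(\rho+1)-1}\bigl((u^{\rho+1}-1)/(\rho+1)\bigr)^{\alpha-1}$ and the crude bound $t^{\alpha(\rho+1)-1}\le b^{\alpha(\rho+1)-1}$ (valid since $0<a<b<\infty$; if the exponent is negative one bounds by $a^{\alpha(\rho+1)-1}$ instead and the hypothesis $\rho\ge c$ will be what guarantees $\varphi\in L^1$ and the right power of $b$ can still be used — this needs a small case check).

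Next I would apply the generalized Minkowski inequality: for a kernel acting by the multiplicative shift $g \mapsto \int_1^{b/a}\varphi(u)\,g(\cdot/u)\,du$, taking $\textit{X}^p_c$-norms (equivalently, $L^p$ with respect to $dt/t$ of the weighted function $t^c f(t)$) one gets
\[
\Bigl\|\int_1^{b/a}\varphi(u)\,g(\cdot\,u^{-1})\,du\Bigr\|_{L^p(dt/t)}
\le \int_1^{b/a}\varphi(u)\,\|g(\cdot\,u^{-1})\|_{L^p(dt/t)}\,du
= \Bigl(\int_1^{b/a}\varphi(u)\,du\Bigr)\|g\|_{L^p(dt/t)},
\]
using that $dt/t$ is invariant under the dilation $t\mapsto tu^{-1}$, so $\|g(\cdot u^{-1})\|_{L^p(dt/t)} = \|g\|_{L^p(dt/t)}$ (the domain shrinks, which only helps). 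With $g(t) = t^c f(t)$ this yields exactly $\|{}^\rho_a I^\alpha_t f\|_{\textit{X}^p_c} \le \sl{K}\,\|f\|_{\textit{X}^p_c}$ with the stated $\sl{K}$. The case $p = \infty$ is handled by the same substitution but with the trivial pointwise estimate $t^c|{}^\rho_a I^\alpha_t f(t)| \le \|f\|_{\textit{X}^\infty_c}\int_1^{b/a}\varphi(u)\,du\cdot b^{\alpha(\rho+1)-1}/\Gamma(\alpha)$.

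**The main obstacle** is purely bookkeeping but genuinely delicate: tracking the exponents of $t$ and $u$ through the substitution so that precisely the combination $(t/u)^c f(t/u)$ is isolated, and verifying that the residual $t$-power can be dominated by $b^{\alpha(\rho+1)-1}$ over all of $[a,b]$ — this is where the sign of $\alpha(\rho+1)-1$ matters and where I expect the hypothesis $\rho \ge c$ to enter, ensuring the exponent $c-\alpha(\rho+1)-1$ of $u$ combined with the $(u^{\rho+1}-1)^{\alpha-1}$ factor gives a finite integral $\sl K < \infty$ near $u = 1$ (where $(u^{\rho+1}-1)^{\alpha-1}$ may blow up if $\alpha<1$, but is integrable since $\alpha>0$) and near $u = b/a$ (no issue, finite interval). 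A secondary point requiring care is the interchange of the norm integral with the inner $\tau$-integral, i.e. justifying Minkowski's integral inequality in this weighted setting — this is standard given $f \in \textit{X}^p_c$ and $\varphi \in L^1(1,b/a)$, but should be stated.
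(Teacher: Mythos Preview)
Your approach is essentially identical to the paper's: both perform the substitution $u = t/\tau$ to convert the operator into a multiplicative convolution, bound the residual power of the outer variable by $b^{\alpha(\rho+1)-1}$, apply the generalized Minkowski inequality, and use dilation invariance of $dt/t$ to recover $\|f\|_{X^p_c}$; the case $p=\infty$ is handled pointwise in the same way. Your worry about the sign of $\alpha(\rho+1)-1$ when bounding $t^{\alpha(\rho+1)-1}$ is legitimate---the paper's proof glosses over this step as well---but it does not affect the overall strategy.
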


\begin{proof}
First consider the case $1 \leq p \leq \infty$. Since $f(t) \in \textit{X}^p_c(a,b)$, then $t^{c-1/p}f(t) \in \textit{L}_p(a,b)$ and we can apply the generalized Minkowsky inequality. We have
\begin{align}
\|{}^\rho_{a}I^\alpha_{t}f\|_{\textit{X}^p_c}
&=\Big(\int_a^{b}x^{cp}\left|\frac{1}{(\rho+1)^{\alpha-1}\Gamma(\alpha)}\int^x_a\big(x^{\rho+1}-t^{\rho+1}\big)^{\alpha-1}t^\rho f(t) dt\right|^p\frac{dx}{x} \Big)^\frac{1}{p} \nonumber \\
&=\Big(\int_a^{b}\left|\frac{1}{\Gamma(\alpha)}\int^x_a x^{c-\frac{1}{p}}\, t^\rho \Big(\frac{x^{\rho+1}-t^{\rho+1}}{\rho +1}\Big)^{\alpha-1} f(t) dt\right|^p dx \Big)^\frac{1}{p}\nonumber \\
&=\Big(\int_a^{b}\left|\frac{1}{\Gamma(\alpha)}\int^x_a x^{c-\frac{1}{p}}\, t^{\alpha(\rho+1)-1} \Big(\frac{{(\frac{x}{t})}^{\rho+1}-1}{\rho +1}\Big)^{\alpha-1} f(t) dt\right|^p dx \Big)^\frac{1}{p}\nonumber \\
&=\Big(\int_a^{b}\left|\frac{1}{\Gamma(\alpha)}\int_1^\frac{x}{a} x^{c-\frac{1}{p}}\, \Big(\frac{x}{u}\Big)^{\alpha(\rho+1)-1} \Big(\frac{u^{\rho+1}-1}{\rho +1}\Big)^{\alpha-1} f\Big(\frac{x}{u}\Big) x \frac{du}{u^2}\right|^p dx \Big)^\frac{1}{p}\nonumber \\
&\leq \int_1^{\frac{b}{a}}\frac{1}{\Gamma(\alpha)}\Big(\frac{u^{\rho+1}-1}{\rho +1}\Big)^{\alpha -1}\cdot\frac{1}{u^{\alpha(\rho+1)}}\Big(\int_{at}^b x^{cp} \big|f\Big(\frac{x}{u}\Big)\big|^p\,\frac{dx}{x}\Big)^{\frac{1}{p}} du \cdot b^{\alpha(\rho+1)-1}\nonumber \\
 &= \int_1^{\frac{b}{a}}\frac{b^{\alpha(\rho+1)-1}}{\Gamma(\alpha)}\Big(\frac{u^{\rho+1}-1}{\rho +1}\Big)^{\alpha -1}\cdot\frac{u^c}{u^{\alpha(\rho+1)+1}}\Big(\int_a^{b/u} \big|t^c f(t)\big|^p\,\frac{dt}{t}\Big)^{\frac{1}{p}}du \nonumber 
\end{align}
and hence 
\begin{equation*}
\|{}^\rho_a I^\alpha_{t}f \|_{\textit{X}^p_c} \leq \sl{M}\|f\|_{\textit{X}^p_c}
\end{equation*}
where
\begin{equation}
   \sl{M} = \frac{b^{\alpha(\rho+1)-1}}{\Gamma(\alpha)}\int^{\frac{b}{a}}_1 u^{c-\alpha(\rho+1)-1}\Big(\frac{u^{\rho+1}-1}{\rho +1}\Big)^{\alpha -1} du \qquad  ; 1\leq p < \infty
\label{eq:con2}   
\end{equation}
thus, Theorem \ref{eq:th1} is proved for $1\leq p < \infty$. For $p=\infty$, by (\ref{eq:df2}) and (\ref{eq:fint}) we have
\begin{align}
\Big|x^c\big({}^\rho_a I^\alpha_{t}f\big)(x)\Big| &\leq \frac{1}{(\rho + 1)^{\alpha -1}\Gamma({\alpha})} \int^x_a (x^{\rho +1} - \tau^{\rho +1})^{\alpha -1} \tau^\rho \Big(\frac{x}{\tau}\Big)^c \big|\tau^cf(\tau)\big| d\tau \nonumber \\
&=\frac{b^{\alpha(\rho+1)-1}}{\Gamma(\alpha)}\int^{\frac{b}{a}}_1 u^{c-\alpha(\rho+1)-1}\Big(\frac{u^{\rho+1}-1}{\rho +1}\Big)^{\alpha -1} du \nonumber
\end{align}
after the substitution $u=x/\tau$. This agrees with (\ref{eq:con2}) above. This completes the proof of the theorem.  
\end{proof}	

\begin{rem}
Note that this proof is similar to the proof of the Theorem 2.1 in \cite{key-3}.
\end{rem}

The following version of the Theorem \ref{eq:th1} has been proved in \cite{key-3}, for the special case when $\rho \rightarrow -1^+$.  


\begin{thm}
Let $\alpha >0,\, 1 \leq p \leq \infty,\, 0 <a <b <\infty$ and let $\rho \in \textbf{R}$ and $c\in \textbf{R}$ be such that $\rho \geq c$. Then the operator ${}^\rho_{a} I^{\alpha}_{t}$ is bounded in $\textit{X}^p_c(a,b)$ and
\begin{equation}
\|{}^\rho_a J^\alpha_{t}f \|_{\textit{X}^p_c} \leq \mathcal{K}\|f\|_{\textit{X}^p_c}
\end{equation} 
where 
\begin{equation*}
\mathcal{K} = \frac{1}{\Gamma(\alpha+1)}\Big(\log\frac{b}{a}\Big)^\alpha
\end{equation*}
for $\rho = c$, while
\begin{equation*}
\mathcal{K} = \frac{1}{\Gamma(\alpha)}(\rho - c)^{-\alpha} \gamma\Big[\alpha,(\rho - c)\log\Big(\frac{b}{a}\Big)\Big]  
\end{equation*}
for $\rho >c$, where $\gamma(\alpha, \beta)$ is the incomplete gamma-function defined by
\begin{equation*}
\gamma(\alpha, \beta)= \int^x_0 t^{\alpha -1}e^{-t}\,dt
\end{equation*}
\end{thm}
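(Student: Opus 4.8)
The plan is to derive this as a corollary of Theorem~\ref{eq:th1}. That theorem already gives the boundedness of ${}^\rho_a I^\alpha_t$ on $\textit{X}^p_c(a,b)$ together with an explicit constant $K$ in the form of a $u$-integral, so the task is reduced to evaluating that integral in closed form and rewriting it through the incomplete gamma function. First I would rearrange the constant of Theorem~\ref{eq:th1}: exactly as in the Hadamard limit $\rho\to-1^+$ treated in Section~2, the factor $\big((u^{\rho+1}-1)/(\rho+1)\big)^{\alpha-1}$ combines with the power weight $u^{c-\alpha(\rho+1)-1}$ so that $K$ takes the cleaner form $\frac{1}{\Gamma(\alpha)}\int_1^{b/a}(\log u)^{\alpha-1}u^{c-\rho-1}\,du$; this mimics the computation in the proof of Theorem~2.1 of \cite{key-3}, with $\rho$ entering only through the exponent $\rho-c$.

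Then the substitution $w=\log u$ (so that $du/u=dw$, with range $0\le w\le\log(b/a)$) brings $K$ to $\frac{1}{\Gamma(\alpha)}\int_0^{\log(b/a)}w^{\alpha-1}e^{-(\rho-c)w}\,dw$, and I would split into the two stated cases. For $\rho=c$ the exponential is identically $1$, the integral is $\frac{1}{\alpha}(\log\frac{b}{a})^\alpha$, and $\alpha\Gamma(\alpha)=\Gamma(\alpha+1)$ gives $\mathcal{K}=\frac{1}{\Gamma(\alpha+1)}(\log\frac{b}{a})^\alpha$. For $\rho>c$ the further substitution $t=(\rho-c)w$ turns the integral into $(\rho-c)^{-\alpha}\int_0^{(\rho-c)\log(b/a)}t^{\alpha-1}e^{-t}\,dt=(\rho-c)^{-\alpha}\gamma[\alpha,(\rho-c)\log\frac{b}{a}]$, which is the second formula. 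As a sanity check, letting $\rho-c\to0^+$ and using $\gamma(\alpha,z)\sim z^\alpha/\alpha$ recovers the $\rho=c$ value, so the two formulas patch together consistently.

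I expect the first step to be the main obstacle. In the Hadamard limit one has $u^{\rho+1}-1=(\rho+1)\log u$ exactly, so the replacement of $\big((u^{\rho+1}-1)/(\rho+1)\big)^{\alpha-1}$ by $(\log u)^{\alpha-1}$ is immediate, and that is precisely where Theorem~2.1 of \cite{key-3} lives. For general $\rho>-1$ the map $s\mapsto s^{\rho+1}$ is no longer linear, so one must instead estimate $\big((u^{\rho+1}-1)/(\rho+1)\big)^{\alpha-1}$ against $(\log u)^{\alpha-1}$ by monotonicity on $u\in[1,b/a]$, treating the cases $\alpha\ge1$ and $\alpha<1$ separately and absorbing the discrepancy into the explicit power weights. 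Getting that bookkeeping to land on exactly $\mathcal{K}$, rather than on $\mathcal{K}$ inflated by a factor depending on $b/a$, is the delicate part; once it is settled, the two substitutions and the gamma-function identities above complete the argument.
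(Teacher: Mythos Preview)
The paper does not prove this theorem at all: the sentence immediately before it says it ``has been proved in \cite{key-3}, for the special case when $\rho\to-1^+$,'' and no argument is given. It is simply a quotation of Kilbas's boundedness result for Hadamard-type integrals, inserted to record what Theorem~\ref{eq:th1} becomes in the Hadamard limit. The symbol $\rho$ appearing in the displayed constants is Kilbas's weight parameter (written $\mu$ in \cite{key-3}), not the paper's deformation parameter; the letter has just been reused.

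Your middle paragraph is exactly the computation from \cite{key-3}: once one has $K=\frac{1}{\Gamma(\alpha)}\int_1^{b/a}(\log u)^{\alpha-1}u^{c-\rho-1}\,du$, the substitutions $w=\log u$ and then $t=(\rho-c)w$ give the two stated values of $\mathcal{K}$, and the consistency check as $\rho-c\to0^+$ is correct. But the ``main obstacle'' you flag in your first and last paragraphs is not an obstacle, because the theorem is not claiming what you think. It is not asserting that for general $\rho>-1$ the constant $K$ of Theorem~\ref{eq:th1} can be massaged into this logarithmic form; it is only recording the Hadamard case, where $\big(\frac{u^{\rho+1}-1}{\rho+1}\big)^{\alpha-1}$ is literally $(\log u)^{\alpha-1}$ and no monotonicity estimate is needed. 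Your proposed bookkeeping for $\alpha\gtrless1$ cannot land on exactly this $\mathcal{K}$ for generic $\rho$, and it is not supposed to: the displayed $\mathcal{K}$ is a Hadamard-specific constant, not a closed-form evaluation of the $K$ in Theorem~\ref{eq:th1}. So the ``delicate part'' you anticipate is a misreading of the statement rather than a genuine gap to be closed.
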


Substituting $c=1/p$ in Theorem \ref{eq:th1} and taking (\ref{eq:df3}) into account, we deduce the boundedness of the operator ${}^\rho_a I^\alpha_{t}$ in the space $\textit{L}^p (a,b)$. 

\begin{cor}
Let $\alpha >0, \, 1 \leq p \leq \infty,\, 0 <a <b <\infty$ and let $\rho \in \textbf{R}$ be such that $\rho \geq 1/p$. Then the operator ${}^\rho_{a} I^{\alpha}_{t}$ is bounded in $\textit{L}^p(a,b)$ and
\begin{equation}
\|{}^\rho_a J^\alpha_{t}f \|_p \leq \mathcal{K_1}\|f\|_p,
\end{equation} 
where 
\begin{equation}
\mathcal{K_1} = \frac{b^{\alpha(\rho+1)}}{\Gamma(\alpha)}\int^{\frac{b}{a}}_1 u^{\frac{1}{p}-\alpha(\rho+1)-1}\Big(\frac{u^{\rho+1}-1}{\rho +1}\Big)^{\alpha -1} du \quad \text{; $\rho \neq -1$}
\end{equation}
\label{eq:con3}
\end{cor}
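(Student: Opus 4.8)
The plan is to obtain this statement as an immediate specialization of Theorem~\ref{eq:th1}, so the entire argument is bookkeeping in the parameter $c$. First I would set $c = 1/p$ throughout Theorem~\ref{eq:th1}. The hypothesis $\rho \ge c$ then becomes $\rho \ge 1/p$, which is precisely the hypothesis of the corollary, while the remaining conditions $\alpha > 0$, $1 \le p \le \infty$, $0 < a < b < \infty$ carry over verbatim.

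Next I would invoke the identification recorded in~(\ref{eq:df3}): when $c = 1/p$ the weighted space $\textit{X}^p_{1/p}(a,b)$ coincides, with identical norm, with the space $\textit{L}^p(a,b)$ equipped with $\|f\|_p = \big(\int_a^b |f(t)|^p \, dt/t\big)^{1/p}$. Consequently the conclusion of Theorem~\ref{eq:th1} that ${}^\rho_a I^\alpha_t$ is bounded on $\textit{X}^p_{1/p}(a,b)$ is literally the statement that it is bounded on $\textit{L}^p(a,b)$, and the estimate $\|{}^\rho_a I^\alpha_t f\|_{\textit{X}^p_c} \le \mathcal{K}\|f\|_{\textit{X}^p_c}$ becomes $\|{}^\rho_a I^\alpha_t f\|_p \le \mathcal{K}_1\|f\|_p$ once we relabel the constant.

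Finally I would substitute $c = 1/p$ into the constant of Theorem~\ref{eq:th1},
\[
\mathcal{K} = \frac{b^{\alpha(\rho+1)-1}}{\Gamma(\alpha)}\int_1^{b/a} u^{\,c-\alpha(\rho+1)-1}\Big(\frac{u^{\rho+1}-1}{\rho+1}\Big)^{\alpha-1}\,du ,
\]
which produces exactly the $\mathcal{K}_1$ in the statement with the exponent $c-\alpha(\rho+1)-1$ replaced by $\tfrac1p-\alpha(\rho+1)-1$ (the power of $b$ in front should read $b^{\alpha(\rho+1)-1}$ to match Theorem~\ref{eq:th1} exactly). I expect no genuine obstacle here: the whole content is the substitution $c=1/p$ together with the already-established equality $\textit{X}^p_{1/p}(a,b)=\textit{L}^p(a,b)$. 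The only point deserving a moment's care is that the defining integral for $\mathcal{K}_1$ converges at the lower endpoint $u=1$, where $(u^{\rho+1}-1)^{\alpha-1}$ is singular for $0<\alpha<1$; but this is inherited from Theorem~\ref{eq:th1}, since $b/a<\infty$ and the singularity $(u-1)^{\alpha-1}$ is integrable for every $\alpha>0$.
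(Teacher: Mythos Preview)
Your proposal is correct and matches the paper's own approach exactly: the paper simply states that substituting $c = 1/p$ in Theorem~\ref{eq:th1} and invoking the identification~(\ref{eq:df3}) yields the corollary. Your aside that the prefactor should be $b^{\alpha(\rho+1)-1}$ rather than $b^{\alpha(\rho+1)}$ is a valid observation---this appears to be a typo in the paper's statement of the corollary, since the constant in Theorem~\ref{eq:th1} indeed carries the exponent $\alpha(\rho+1)-1$.
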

Notice that this is Corollary 2.2 of \cite{key-3}. We now turn to algebraic properties of the integral operator. 

\section{Semigroup property}
In this section we give semigroup properties of the integral operator

\begin{thm} \label{eq:th2}
Let $\alpha >0,\, \beta >0,\, 1 \leq p \leq \infty, \, 0 < a < b < \infty $ and let $\rho \in \mathbb{R}$ and $c \in \mathbb{R}$ be such that $\rho \geq c$. Then for $f \in \textit{X}^p_c(a,b)$ the semigroup property holds. That is,
\begin{equation}
  {}^\rho_{a} I^{\alpha}_{t}\,{}^\rho_{a} I^{\beta}_{t}f = {}^\rho_{a} I^{\alpha+\beta}_{t}f .
\end{equation}
\label{eq:semi}  
\end{thm}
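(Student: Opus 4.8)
The plan is to reduce the semigroup identity to a one‑dimensional beta‑integral computation by substituting the two integral representations into one another, exchanging the order of integration (justified by Theorem~\ref{eq:th1}, which guarantees that ${}^\rho_a I^\beta_t f \in \textit{X}^p_c(a,b)$ whenever $f$ is, so that the outer operator ${}^\rho_a I^\alpha_t$ makes sense and all integrals converge absolutely), and then evaluating the resulting inner integral as an Euler beta function. Concretely, I would write
\begin{equation*}
{}^\rho_a I^\alpha_t\,{}^\rho_a I^\beta_t f(x)
= \frac{(\rho+1)^{2-\alpha-\beta}}{\Gamma(\alpha)\Gamma(\beta)}
\int_a^x \!\! (x^{\rho+1}-t^{\rho+1})^{\alpha-1} t^\rho
\int_a^t (t^{\rho+1}-\tau^{\rho+1})^{\beta-1}\tau^\rho f(\tau)\,d\tau\,dt,
\end{equation*}
and then apply Fubini to swap $t$ and $\tau$, so that $\tau$ runs over $(a,x)$ and $t$ over $(\tau,x)$.

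The key step is then the inner integral
\begin{equation*}
\int_\tau^x (x^{\rho+1}-t^{\rho+1})^{\alpha-1}(t^{\rho+1}-\tau^{\rho+1})^{\beta-1} t^\rho\,dt ,
\end{equation*}
which I would handle with the change of variable $s^{\rho+1} = \dfrac{t^{\rho+1}-\tau^{\rho+1}}{x^{\rho+1}-\tau^{\rho+1}}$ (equivalently, first substitute $w = t^{\rho+1}$, which turns $t^\rho\,dt$ into $\frac{1}{\rho+1}\,dw$, and then do the affine rescaling $w = \tau^{\rho+1} + (x^{\rho+1}-\tau^{\rho+1})v$ with $v \in (0,1)$). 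This collapses the inner integral to $\frac{1}{\rho+1}(x^{\rho+1}-\tau^{\rho+1})^{\alpha+\beta-1}\int_0^1 v^{\beta-1}(1-v)^{\alpha-1}\,dv = \frac{1}{\rho+1}(x^{\rho+1}-\tau^{\rho+1})^{\alpha+\beta-1} B(\alpha,\beta)$, and using $B(\alpha,\beta) = \Gamma(\alpha)\Gamma(\beta)/\Gamma(\alpha+\beta)$ together with the prefactor bookkeeping $(\rho+1)^{2-\alpha-\beta}\cdot\frac{1}{\rho+1} = (\rho+1)^{1-(\alpha+\beta)}$ yields exactly ${}^\rho_a I^{\alpha+\beta}_t f(x)$.

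I would present the $\rho \to -1^+$ (Hadamard) case and the $p=\infty$ case as routine limits or minor variants rather than separate arguments, since the substitution degenerates smoothly to $\log$ and the Fubini justification is even easier for $L^\infty$‑type bounds. The main obstacle, and the only point requiring genuine care, is the interchange of the order of integration: one must check that the double integral is absolutely convergent, which follows because the composition of bounded operators from Theorem~\ref{eq:th1} is bounded, hence the iterated integral defining ${}^\rho_a I^\alpha_t\,{}^\rho_a I^\beta_t f$ is finite a.e.\ and Tonelli/Fubini applies to $|f|$; a secondary but purely mechanical nuisance is keeping the powers of $(\rho+1)$ straight through the substitution, and verifying that the argument is uniform in the sign of $\rho+1$ (the substitution $w=t^{\rho+1}$ is monotone increasing for $\rho>-1$ and decreasing for $\rho<-1$, but in the latter case the reversal of limits compensates the sign of $dw$, so the final formula is unchanged).
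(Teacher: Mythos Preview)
Your proposal is correct and follows essentially the same route as the paper: write the composition as an iterated integral, swap the order of integration via Fubini, reduce the inner integral to an Euler beta integral by the substitution $y=(t^{\rho+1}-\tau^{\rho+1})/(x^{\rho+1}-\tau^{\rho+1})$ (your two-step substitution $w=t^{\rho+1}$ followed by an affine rescaling is the same change of variable), and then appeal to Theorem~\ref{eq:th1} to pass from sufficiently nice $f$ to all of $X^p_c(a,b)$. Your extra remarks on the $\rho\to -1^+$ limit, the $p=\infty$ case, and the sign of $\rho+1$ go slightly beyond what the paper records but do not change the argument.
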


\begin{proof}
Using Fubinis Theorem, for "sufficiently good" function $f$, we have
\begin{align}
{}^\rho_{a} I^{\alpha}_{t}\,{}^\rho_{a} I^{\beta}_{t}f(x) 
       &=\frac{1}{(\rho + 1)^{\alpha -1}\Gamma({\alpha})} \int^x_a (x^{\rho +1} - \tau^{\rho +1})^{\alpha -1} \tau^\rho d\tau \nonumber\\
       &\hspace{2cm}\times \frac{1}{(\rho + 1)^{\beta -1}\Gamma({\beta})} \int^\tau_a (\tau^{\rho +1} - t^{\rho +1})^{\beta -1} t^\rho f(t)d\tau \nonumber \\
       &=\frac{1}{(\rho+1)^{\alpha +\beta - 2} \Gamma(\alpha)\Gamma(\beta)}\int_a^x t^\rho f(t) \nonumber \\
       & \hspace{2cm}\times \int_t^x \big(x^{\rho+1} - \tau^{\rho+1}\big)^{\alpha -1}\big(\tau^{\rho+1} - t^{\rho+1}\big)^{\beta-1}\, \tau^\rho \, d\tau \, dt \label{eq:pf1}\\
\nonumber\end{align}
The inner integral is evaluated by the change of variable $y = (\tau^{\rho+1}-t^{\rho+1})/(x^{\rho+1}-t^{\rho+1}):$
\begin{align}
\int_t^x \big(x^{\rho+1} - \tau^{\rho+1}\big)^{\alpha -1}\big(\tau^{\rho+1} - t^{\rho+1}&\big)^{\beta-1}\,\tau^\rho \, d\tau \, dt  \nonumber \\               &=\frac{(x^{\rho+1}-t^{\rho+1})}{\rho+1}\int_0^1(1-y)^{\alpha-1}y^{\beta -1}dy,\nonumber\\
&=\frac{(x^{\rho+1}-t^{\rho+1})}{\rho+1}B(\alpha,\beta)\nonumber\\
&=\frac{(x^{\rho+1}-t^{\rho+1})}{\rho+1}\cdot\frac{\Gamma(\alpha)\Gamma(\beta)}{\Gamma(\alpha+\beta)}\label{eq:pf2}
\end{align}
according to the known formulae for the beta function \cite{key-8,key-2,key-4}. Substituting (\ref{eq:pf2}) into (\ref{eq:pf1}) we obtain
\begin{align}
{}^\rho_{a} I^{\alpha}_{t}\,{}^\rho_{a} I^{\beta}_{t}f(x) 
               &=\frac{(\rho + 1)^{1-\alpha -\beta}}{\Gamma({\alpha+\beta})}\int^x_a (x^{\rho +1}- t^{\rho +1})^{\alpha +\beta-1}t^\rho f(t)dt,\nonumber\\
               &={}^\rho_{a} I^{\alpha+\beta}_{t}f(x), 
\end{align}
and thus, (\ref{eq:semi}) is proved for 'sufficiently good' functions $f$. 
If $\rho \geq c$ then by Theorem \ref{eq:th1} the operators $ {}^\rho_{a} I^{\alpha}_{t},\;{}^\rho_{a} I^{\beta}_{t}\, $ and ${}^\rho_{a} I^{\alpha+\beta}_{t}$ are bunded in $\textit{X}^p_c(a,b)$, hence the realation (\ref{eq:semi}) is true for $f \in \textit{X}^p_c(a,b).$
This completes the proof of the theorem \ref{eq:th2}.
\end{proof}

We have the following corollary.

\begin{cor}
Let $\alpha >0,\, \beta >0,\, 1 \leq p \leq \infty, \, 0 < a < b < \infty $ and let $\rho \in \mathbb{R}$ be such that $\rho \geq 1/p$. Then for $f \in \textit{L}^p(a,b)$ the semigroup property (\ref{eq:semi}) holds.
\end{cor}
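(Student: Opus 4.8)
The plan is to deduce this corollary directly from Theorem \ref{eq:th2}. The essential point is that the hypothesis $\rho \geq 1/p$ is nothing but the condition $\rho \geq c$ of Theorem \ref{eq:th2} specialized to the value $c = 1/p$, so that no new analysis is needed.

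First I would recall the observation made in Section 2, immediately after (\ref{eq:df2}): when $c = 1/p$ with $1 \leq p \leq \infty$, the weighted space $\textit{X}^p_c(a,b)$ reduces, with equality of norms (see (\ref{eq:df3})), to the classical Lebesgue space $\textit{L}^p(a,b)$. Consequently $f \in \textit{L}^p(a,b)$ holds if and only if $f \in \textit{X}^p_{1/p}(a,b)$, and the two norms agree.

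Then I would apply Theorem \ref{eq:th2} with the choice $c = 1/p$: since $\alpha, \beta > 0$, $0 < a < b < \infty$ and $\rho \geq 1/p = c$, the semigroup identity ${}^\rho_{a} I^{\alpha}_{t}\,{}^\rho_{a} I^{\beta}_{t}f = {}^\rho_{a} I^{\alpha+\beta}_{t}f$ holds for every $f \in \textit{X}^p_{1/p}(a,b)$, which by the previous paragraph is exactly $\textit{L}^p(a,b)$. This is precisely the assertion of the corollary.

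There is no genuine obstacle here; the only thing to verify — and it is already recorded in Section 2 — is the identification $\textit{X}^p_{1/p} = \textit{L}^p$. Should a self-contained argument be preferred, one can instead repeat the proof of Theorem \ref{eq:th2} verbatim in this special case: establish the identity for ``sufficiently good'' $f$ through the Fubini computation culminating in the beta-function evaluation (\ref{eq:pf2}), and then pass to arbitrary $f \in \textit{L}^p(a,b)$ by density, invoking the $\textit{L}^p$-boundedness of ${}^\rho_a I^\alpha_t$, ${}^\rho_a I^\beta_t$ and ${}^\rho_a I^{\alpha+\beta}_t$ from Corollary \ref{eq:con3} in place of Theorem \ref{eq:th1}. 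This, however, merely reproduces the earlier work with $c$ fixed at $1/p$.
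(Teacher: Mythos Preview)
Your proposal is correct and matches the paper's approach exactly: the corollary is stated without proof in the paper, being an immediate specialization of Theorem~\ref{eq:th2} to $c = 1/p$, whereupon $\textit{X}^p_{1/p}(a,b) = \textit{L}^p(a,b)$ and the hypothesis $\rho \geq c$ becomes $\rho \geq 1/p$.
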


We can extend (\ref{eq:fint}) to an arbitrary complex order $\alpha \in \mathbb{C}$ and can also define \emph{right-sided} generalized fractional integrals. 

\begin{dfn}
Let $\Omega = [a,b] \, (-\infty <a < b < \infty)$ be a finite interval on the real axis $\mathbb{R}$. The generalized fractional integral ${}^\rho I^\alpha_{a+}f$ of order $\alpha \in \mathbb{C} \; (Re(\alpha) > 0)$ is defined by
\begin{equation}
\big({}^\rho I^\alpha_{a+}f\big)(x) = \frac{(\rho + 1)^{1- \alpha }}{\Gamma({\alpha})} \int^x_a \frac{t^\rho f(t) }{(x^{\rho +1} - t^{\rho +1})^{1-\alpha}}\, dt 
\label{eq:df1}
\end{equation}
for $x > a$ and $Re(\alpha) > 0$. This integral is called the \emph{left-sided} fractional integral. Similarly we can define the \emph{right-sided} fractional integral
${}^\rho I^\alpha_{b-}f$ by
\begin{equation}
\big({}^\rho I^\alpha_{b-}f\big)(x) = \frac{(\rho + 1)^{1- \alpha }}{\Gamma({\alpha})} \int^b_x \frac{t^\rho f(t) }{(t^{\rho +1} - x^{\rho +1})^{1-\alpha}}\, dt
\label{eq:df2}
\end{equation}   
for $x < b$ and $Re(\alpha) > 0$. 
\end{dfn}

In the next section we will define \emph{generalized fractional derivatives} for an arbitrary complex order $\alpha \in \mathbb{C} \; (Re(\alpha) > 0)$ for Riemann-Liouville type (R-G) and similarly, we can define generalized fractional derivatives for Caputo-type (L-G). 


\section{Generalized fractional derivatives}
The integrals (\ref{eq:df1}) and (\ref{eq:df2}) allow one to define the corresponding generalized fractional derivatives. Here we only define Riemann-Liouville type derivatives. For $\rho \rightarrow -1^+$, we will also get the corresponding Hadamard-type derivatives as a natural generalization to the standard Hadamard fractional derivatives. The Caputo-type derivatives can be defined similarly. 


\begin{dfn}{( R-G Type )}

The corresponding \emph{Riemann-type fractinal derivatives}  ${}^\rho D^\alpha_{a+}f$ and ${}^\rho D^\alpha_{b-}f$ of order $\alpha \in \mathbb{C}, \; Re(\alpha)> 0$, are defined by 
\begin{equation}
\big({}^\rho D^{\alpha}_{a+}f\big)(x)  
              = \frac{(\rho + 1)^{\alpha -n +1}}{\Gamma({n -\alpha})}\, \frac{d^n}{dx^n} \int^x_a \frac{t^\rho f(t)}{(x^{\rho +1} - t^{\rho +1})^{\alpha -n +1}}\, dt, 
\label{eq:df3}              
\end{equation}
for $x > a$ and
\begin{equation}
\big({}^\rho D^{\alpha}_{b-}f\big)(x)  
              = \frac{(\rho + 1)^{\alpha -n +1}}{\Gamma({n -\alpha})}\, (-1)^n \frac{d^n}{dx^n} \int^b_x \frac{t^\rho f(t)}{(t^{\rho +1} - x^{\rho +1})^{\alpha -n+1}}\, dt 
\label{eq:df4}              
\end{equation}
for $x < b$, where $n = \lceil Re(\alpha)\rceil$. 
\end{dfn}

If $ 0 < Re(\alpha) < 1$, then 
\begin{equation}
\big({}^\rho D^{\alpha}_{a+}f\big)(x)  
              = \frac{(\rho + 1)^{\alpha -\lfloor Re(\alpha)\rfloor}}{\Gamma({1 -\alpha})}\, \frac{d}{dx} \int^x_a \frac{t^\rho f(t)}{(x^{\rho +1} - t^{\rho +1})^{\alpha -\lfloor Re(\alpha)\rfloor}}\, dt, 
\label{eq:df5}              
\end{equation}
for $x > a$ and
\begin{equation}
\big({}^\rho D^{\alpha}_{b-}f\big)(x)  
              = - \frac{(\rho + 1)^{\alpha -\lfloor Re(\alpha)\rfloor}}{\Gamma({1 -\alpha})}\, \frac{d}{dx} \int^b_x \frac{t^\rho f(t)}{(t^{\rho +1} - x^{\rho +1})^{\alpha -\lfloor Re(\alpha)\rfloor}}\, dt 
\label{eq:df6}              
\end{equation}
for $x<b$. 

When $\alpha \in \mathbb{R}^+$, formulars (\ref{eq:df3}) and (\ref{eq:df4}) take the following forms,
\begin{equation}
\big({}^\rho D^{\alpha}_{a+}f\big)(x)  
              = \frac{(\rho + 1)^{\alpha -n +1}}{\Gamma({n -\alpha})}\, \frac{d^n}{dx^n} \int^x_a \frac{t^\rho f(t)}{(x^{\rho +1} - t^{\rho +1})^{\alpha -n +1}}\, dt, 
\label{eq:df7}              
\end{equation}
for $x > a$ and
\begin{equation}
\big({}^\rho D^{\alpha}_{b-}f\big)(x)  
              = \frac{(\rho + 1)^{\alpha -n +1}}{\Gamma({n -\alpha})}\, (-1)^n \frac{d^n}{dx^n} \int^b_x \frac{t^\rho f(t)}{(t^{\rho +1} - x^{\rho +1})^{\alpha -n+1}}\, dt 
\label{eq:df8}              
\end{equation}
for $x < b$, where $n = \lceil \alpha \rceil$, while formulars (\ref{eq:df5}) and (\ref{eq:df6}) are given by
\begin{equation}
\big({}^\rho D^{\alpha}_{a+}f\big)(x)  
              = \frac{(\rho + 1)^{\alpha}}{\Gamma({1 -\alpha})}\, \frac{d}{dx} \int^x_a \frac{t^\rho f(t)}{(x^{\rho +1} - t^{\rho +1})^{\alpha}}\, dt, 
\label{eq:df9}              
\end{equation}
for $x > a$ and
\begin{equation}
\big({}^\rho D^{\alpha}_{b-}f\big)(x)  
              = - \frac{(\rho + 1)^{\alpha}}{\Gamma({1 -\alpha})}\, \frac{d}{dx} \int^b_x \frac{t^\rho f(t)}{(t^{\rho +1} - x^{\rho +1})^{\alpha}}\, dt 
\label{eq:df10}              
\end{equation}
for $x<b$. 

If $Re(\alpha) = 0 \,(\alpha \ne 0), $ then (\ref{eq:df5}) and (\ref{eq:df6}) give generalized fractional derivatives of a purely imaginary order,
\begin{equation}
\big({}^\rho D^{i\theta}_{a+}f\big)(x)  
              = \frac{(\rho + 1)^{i\theta}}{\Gamma({1 -i\theta})}\, \frac{d}{dx} \int^x_a \frac{t^\rho f(t)}{(x^{\rho +1} - t^{\rho +1})^{i\theta}}\, dt, 
\label{eq:df11}              
\end{equation}
for $x > a$ and
\begin{equation}
\big({}^\rho D^{i\theta}_{b-}f\big)(x)  
              = - \frac{(\rho + 1)^{i\theta}}{\Gamma({1 -i\theta})}\, \frac{d}{dx} \int^b_x \frac{t^\rho f(t)}{(t^{\rho +1} - x^{\rho +1})^{i\theta}}\, dt \quad ; x<b.
\label{eq:df12}              
\end{equation}
where $\theta \in \mathbb{R}$. 




Secondly, we give the definition for the genaralize fractional derivative for Caputo-type. 

\begin{dfn}{( C-G Type )}

Let $\alpha \in \mathbb{C}, \; Re(\alpha)> 0$ and $n = \lceil\alpha\rceil$. The generalised Caputo-type derivative of arbitrary order $\alpha$ of $f(x)$ is defined by
\begin{equation}
{}^\rho D^{\alpha}_{a+}f(x)  
              = \frac{(\rho + 1)^{\alpha -n +1}}{\Gamma({n -\alpha})}\, \int^x_a (x^{\rho +1} - \tau^{\rho +1})^{n -\alpha -1} \tau^\rho f^{(n)}(\tau) d\tau 
\label{eq:df1-3} 
\end{equation}
for $x > a$, and
\begin{equation}
{}^\rho D^{\alpha}_{b-}f(x)  
              = (-1)^n \frac{(\rho + 1)^{\alpha -n +1}}{\Gamma({n -\alpha})}\, \int^b_x (\tau^{\rho +1} - x^{\rho +1})^{n -\alpha -1} \tau^\rho f^{(n)}(\tau) d\tau 
\label{eq:df1-3-2} 
\end{equation}
for $x < b$, if the right-hand-sides exist.
\end{dfn} 
\noindent If $0 < \alpha < 1,$ (\ref{eq:df1-3}) reduces to
\begin{equation}
{}^\rho D^{\alpha}_{a+}f(x)  
              = \frac{(\rho + 1)^{\alpha}}{\Gamma({1 -\alpha})}\, \int^x_a \frac{\tau^\rho }{(x^{\rho +1} - \tau^{\rho +1})^{\alpha} } f^{'}(\tau) d\tau 
\label{eq:df1-4} 
\end{equation}


\noindent To demonstrate the use of the new derivative, we will obtain generalized fractional derivative of the power function for a special case. For simplicity assume $\alpha \in \mathbb{R}^+, \, 0 < \alpha < 1$ and $a=0$, and use the following version of the derivative
\begin{align}
  \big({}^\rho D^\alpha_{0+} f\big)(x) &= \frac{(\rho+1)^\alpha}{\Gamma(1-\alpha)} \, \frac{d}{dx}\int^x_0 \frac{t^\rho}{(x^{\rho+1}-t^{\rho+1})^\alpha} f(t)dt 
\label{eq:simple0}
\end{align}

\begin{ex}
We find the generalized derivative of the function $f(x) = x^\nu$, where $\nu \in \mathbb{R}$. The formula (\ref{eq:simple0}) yields
\begin{align}
  {}^\rho D^\alpha_{0+} x^\nu = \frac{(\rho+1)^\alpha}{\Gamma(1-\alpha)} \, \frac{d}{dx}\int^x_0 \frac{t^\rho}{(x^{\rho+1}-t^{\rho+1})^\alpha}\, t^\nu \, dt
\label{eq:simple1}  
\end{align}
To evaluate the inner integral, we use the substitution $u=t^{\rho+1}/x^{\rho+1}$ to obtain,
\begin{align}
   \int^x_0 \frac{t^\rho}{(x^{\rho+1}-t^{\rho+1})^\alpha}\, t^\nu \, dt 
       &= \frac{x^{(\rho+1)(1-\alpha)+\nu}}{\rho+1} \int^1_0 \frac{u^\frac{\nu}{1+\rho}}{(1-u)^\alpha} \, du \nonumber \\
       &=\frac{x^{(\rho+1)(1-\alpha)+\nu}}{\rho+1} \int^1_0 u^{\frac{\nu+\rho+1}{\rho+1}-1}(1-u)^{(1-\alpha)-1} \, du \nonumber \\
       &= \frac{x^{(\rho+1)(1-\alpha)+\nu}}{\rho+1}\, B\Big(1-\alpha, \frac{\nu+\rho+1}{\rho+1}\Big) \label{eq:simple2}
\end{align}
where $B(.\, ,.)$ is the Beta function. Combining (\ref{eq:simple2}) with (\ref{eq:simple1}), we obtain,
\begin{align}
    {}^\rho D^\alpha_{0+} x^\nu &= \frac{(\rho+1)^{\alpha-1}}{\Gamma(1-\alpha)} \, \frac{d}{dx} \, x^{(\rho+1)(1-\alpha)+\nu}\,B\Big(1-\alpha, \frac{\nu+\rho+1}{\rho+1}\Big) \nonumber\\
    &= \frac{(\rho+1)^{\alpha-1}\Gamma\Big(\frac{\nu}{\rho+1}+1\Big)}{\Gamma\Big(\frac{\nu}{\rho+1}+1-\alpha\Big)}\, x^{\nu+(\rho+1)(1-\alpha)-1} 
\label{eq:gen}    
\end{align}
for $\rho > -1$, after using the properties of the Beta function \cite{key-4} and the relation $\Gamma(z+1)=z\,\Gamma(z)$. When $\rho = 0$ we obtain the Riemann-Liouville fractional derivative of the power function given by \cite{key-2, key-8, key-9},
\begin{equation}
  D^\alpha_{0+} x^\nu = \frac{\Gamma\Big(\nu+1\Big)}{\Gamma\Big(\nu+1-\alpha\Big)}\; x^{\nu-\alpha} 
\end{equation}
\end{ex}
To compare results, we plot (\ref{eq:gen}) for several values of $\rho$. We also consider different values of $\nu$ to see the effect on the degree of the power function. The results are summaries in figure 1 and figure 2:
\begin{figure}[h]
	\centering
	  \subfloat[$\nu$= 1.0]{\includegraphics[width=2.2in, height=3.0in]{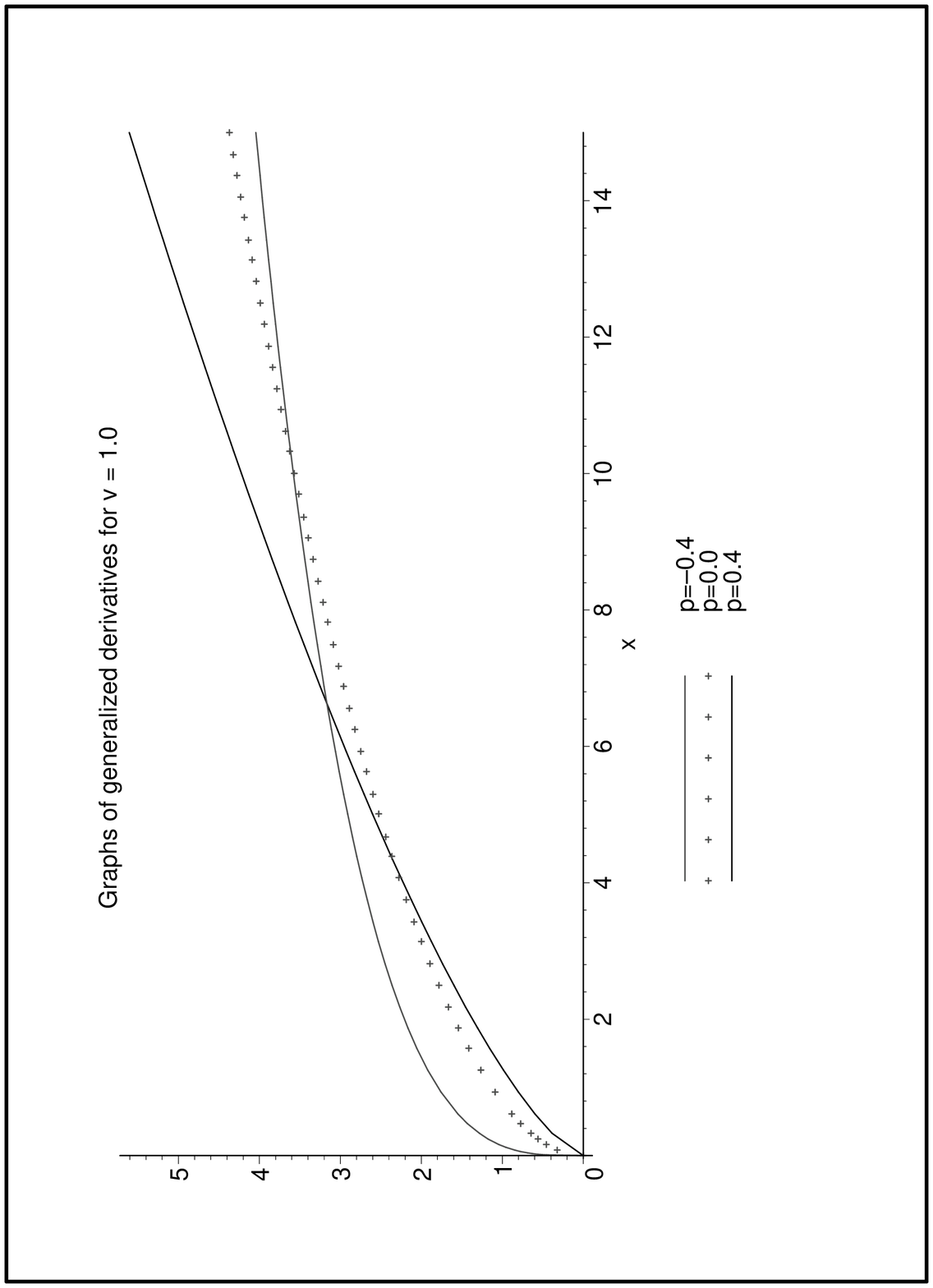}}
	  \hspace{0.1in}
	  \subfloat[$\nu$= 0.2]{\includegraphics[width=2.2in, height=3.0in]{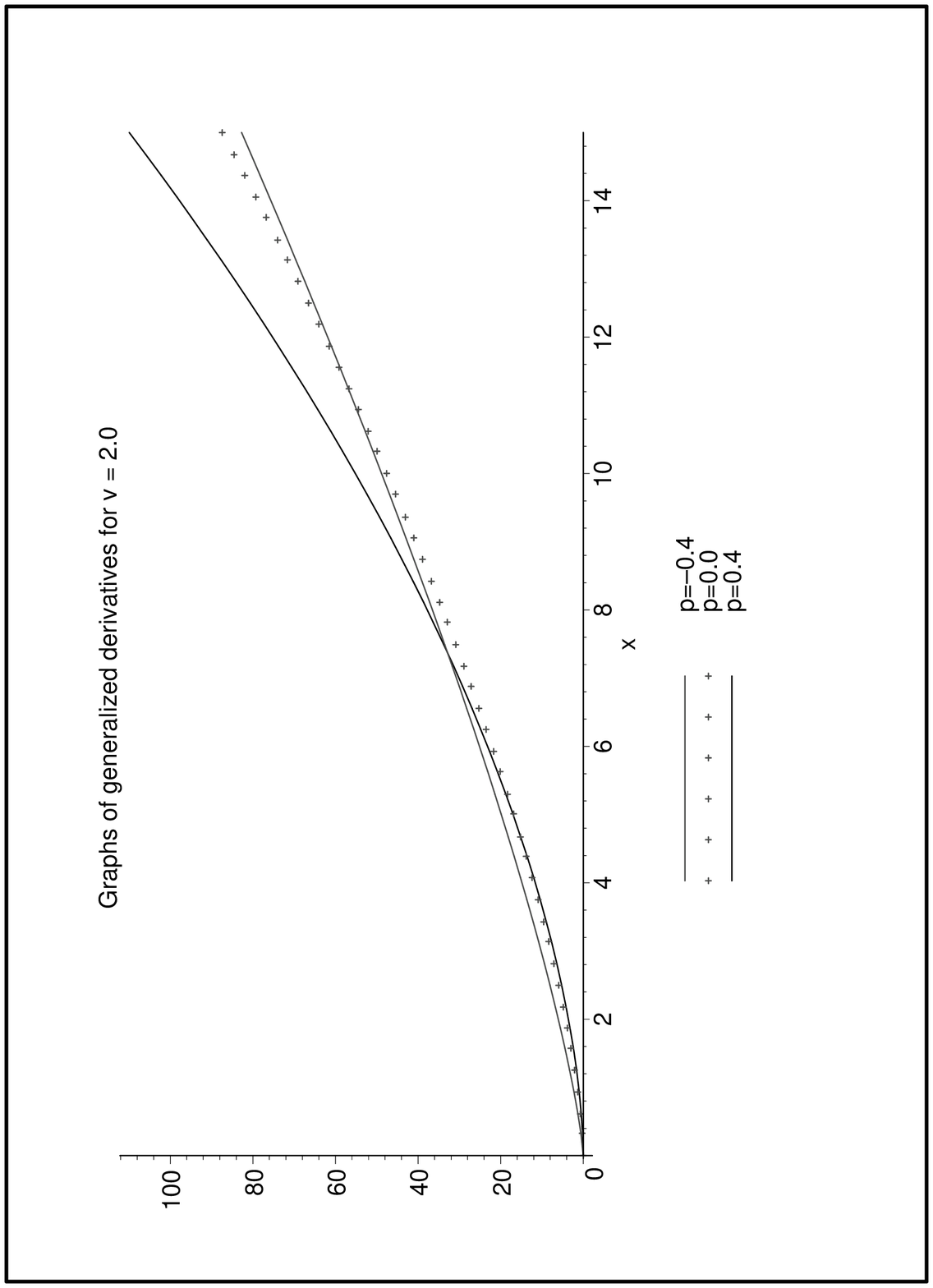}}\\
	  \caption{Generalized fractional derivative of the power function $x^\nu$ for $\rho = -0.4,\, 0.0,\, 0.4$ and $\nu = 1.0, \, 2.0$}        
	\label{fig:FD-1}
\end{figure}

\begin{figure}[h]
	\centering
	  \subfloat[$\nu$= 0.5]{\includegraphics[width=2.2in, height=3.0in]{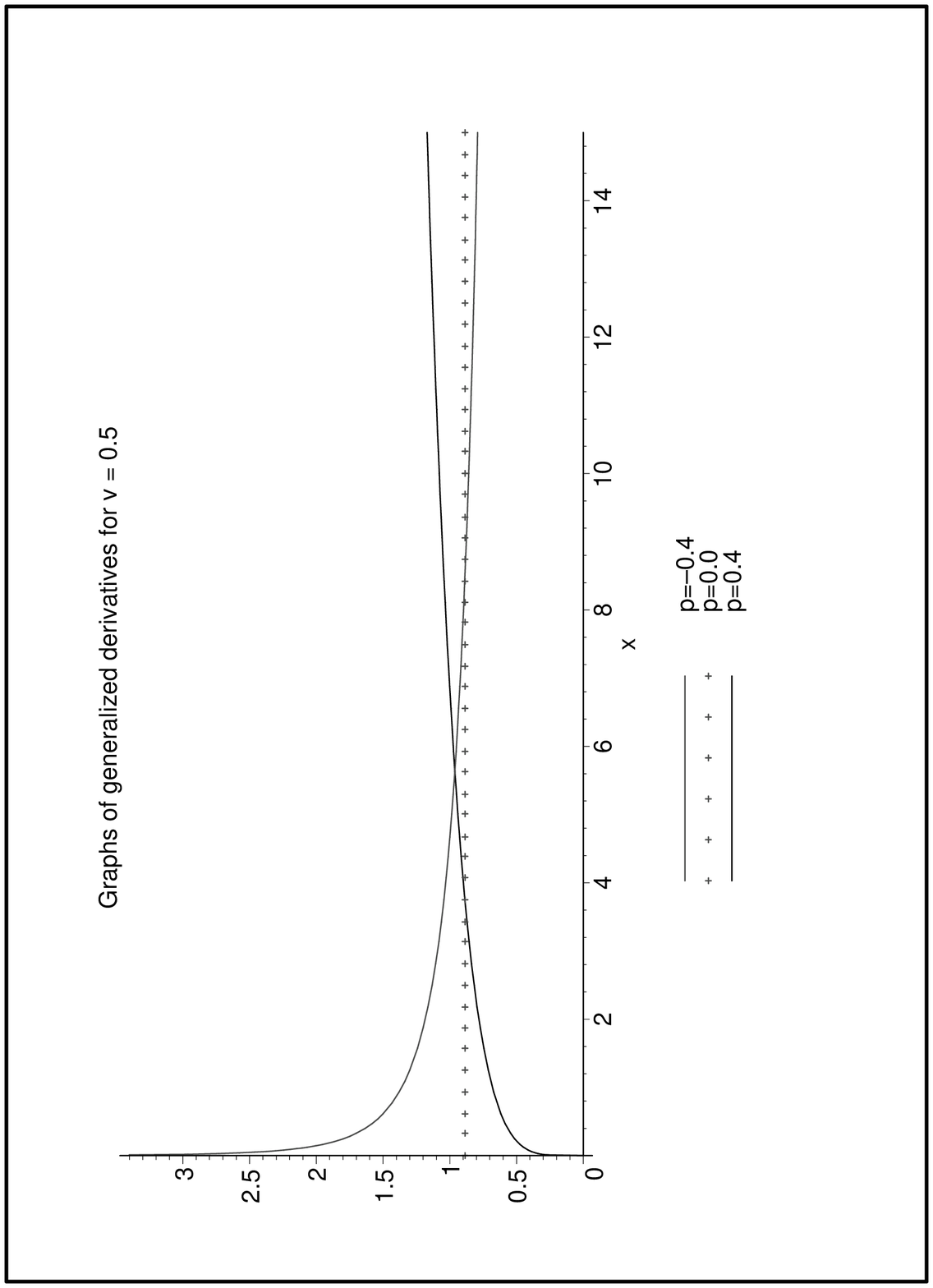}}
	  \hspace{0.1in}
	  \subfloat[$\nu$= 1.5]{\includegraphics[width=2.2in, height=3.0in]{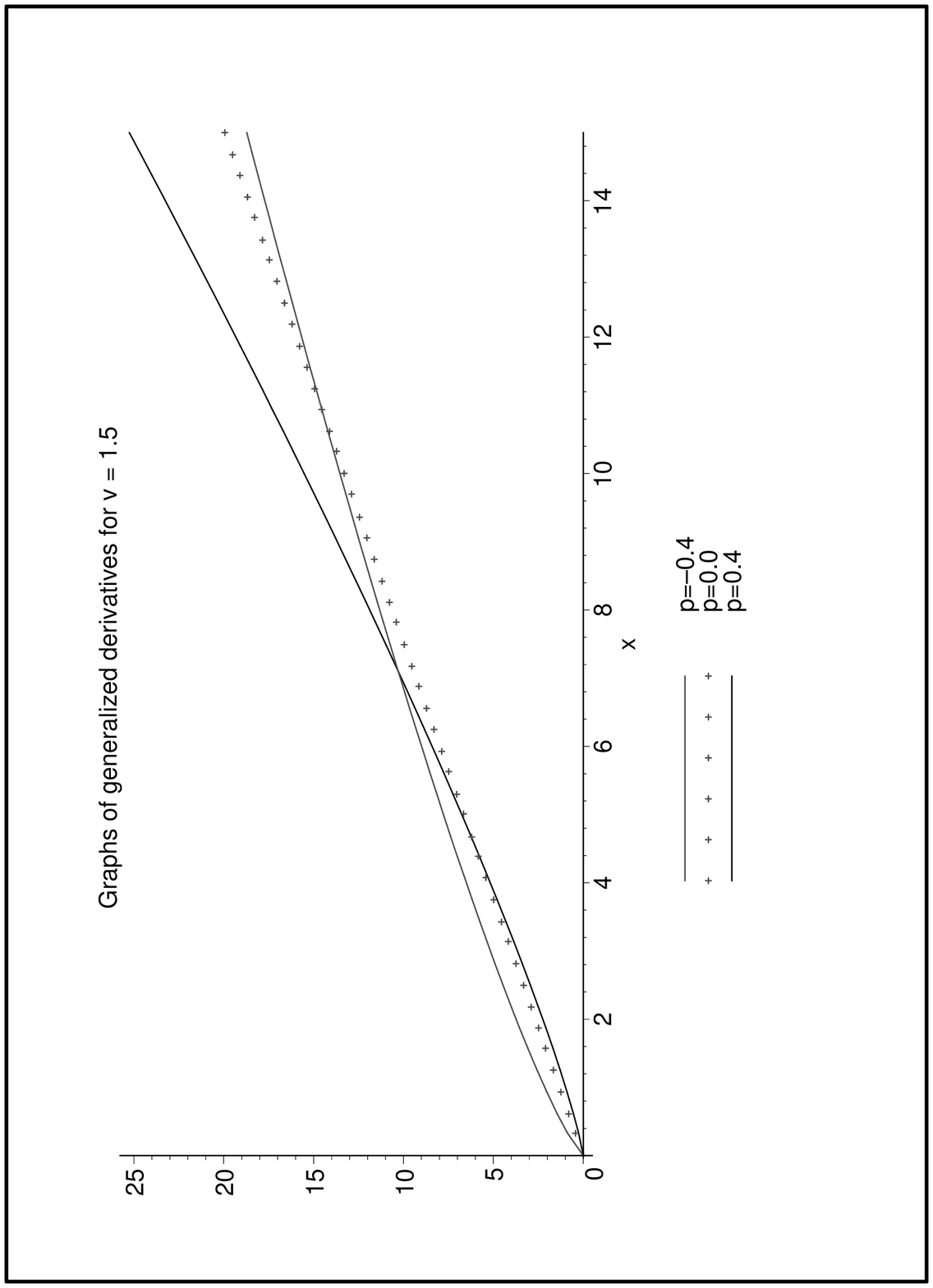}}
	\caption{Generalized fractional derivative of the power function $x^\nu$ for $\rho = -0.4,\, 0.0,\, 0.4$ and $\nu = 0.5, \, 1.5$}        
	\label{fig:FD-2}
\end{figure}

We conclude the paper with the following open problem.
\begin{prb}
Find an exact formula for the left-sided generalized fractional derivative of the power function, $(x-a)^\omega$ with $\omega \in \mathbb{R}$, that is, evaluate the following integral,
\begin{equation}
{}^\rho D^{\alpha}_{a+}(x-a)^\omega  
              = \frac{(\rho + 1)^{\alpha -n +1}}{\Gamma({n -\alpha})}\, \frac{d^n}{dx^n} \int^x_a \frac{t^\rho (t-a)^\omega}{(x^{\rho +1} - t^{\rho +1})^{\alpha -n +1}}\, dt, 
\label{eq:o-prb}              
\end{equation}
for $x > a$ where $\alpha \in \mathbb{C}$, $n = \lceil Re(\alpha)\rceil$ and $\rho \ne -1$.
\end{prb}

\begin{con}
According to the figure 1 and figure 2, we notice that the characteristics of the fractional derivative is highle affected by the value of $\rho$, thus it provides a new direction for the control applications. 

The paper presents a new fractional integration, which generalizes the Riemann-Liouville and Hadamard fractional integrals into a single form, which when a parameter fixed at different values, produces the above integrals as special cases. Conditions are given for such a generalized fractional integration operator to be bounded in an extended Lebesgue measurable space. Semigroup property for the above operator is also proved. We also gave a general definition of the Fractional derivatives and gave some examples.

In a future paper, we will derive formulae for the Laplace, Fourier and Mellin transforms for the generalized fractional integral. We already know that we can deduce Hadamard and Riemann-Liouville integrals for the special cases of $\rho$. We want to further investigate the effect on the new parameter $\rho$. We will also study the generalize fractional derivatives and their properties. Those results will appear elsewhere. 
\end{con}



%

\end{document}